\newtheorem{theorem}{Theorem}
\newtheorem{theorem'}{Theorem*}
\newtheorem{proposition}[theorem]{Proposition}
\theoremstyle{definition}
\newtheorem{definition}[theorem]{Definition}
\title{Computing Galois Groups of Fano Problems}
\author[T.~Yahl]{Thomas Yahl} 
\address{T.~Yahl\\ 
         Department of Mathematics\\ 
         Texas A\&M University\\ 
         College Station\\ 
         Texas \ 77843\\ 
         USA} 
\email{thomasjyahl@math.tamu.edu} 
\urladdr{http://www.math.tamu.edu/~thomasjyahl} 
\newcommand{\gr}{\mathbb{G}}
\newcommand{\gal}{\mathcal{G}}
\definecolor{myBlue}{rgb}{0, 0.22, 0.66}
\newcommand{\defcolor}[1]{{\color{myBlue}#1}}
\begin{document}

\begin{abstract}
A Fano problem consists of enumerating linear spaces of a fixed dimension on a variety, generalizing the classical problem of 27 lines on a cubic surface. Those Fano problems with finitely many linear spaces have an associated Galois group that acts on these linear spaces and controls the complexity of computing them in suitable coordinates. These Galois groups were first defined and studied by Jordan, who in particular considered the problem of lines in $\mathbb{P}^3$ on a cubic surface. Recently, Hashimoto and Kadets determined the Galois groups for a special family of Fano problems and showed that all other Fano problems have Galois group that contains the alternating group. A complete classification of Galois groups of Fano problems rests on the determination of these Galois groups. We use computational tools to prove that several Fano problems of moderate size have Galois group equal to the symmetric group, each of which were previously unknown.
\end{abstract}

\maketitle

\section{Introduction}
The classical problem from enumerative geometry to describe the set of 27 lines in $\mathbb{P}^3$ on a smooth cubic surface is one of the first examples of a Fano problem: enumerating $r$--planes in $\mathbb{P}^n$ lying on a variety $X$. The Grassmanian is a complex projective varieties that parameterizes the set of all $r$--planes in $\mathbb{P}^n$, and the family of $r$--planes that lie on $X$ is a subscheme of the Grassmanian called the Fano scheme of $X$. We consider $X$ a general complete intersection, where invariants such as dimension and degree of its Fano scheme are determined using combinatorial data. When the Fano scheme of $X$ is finite, we call the problem of describing its Fano scheme a Fano problem. This setting covers familiar problems such as lines in $\mathbb{P}^3$ on a cubic surface and lines in $\mathbb{P}^4$ on the intersection of two quadric hypersurfaces.

To each Fano problem there is an associated Galois group which acts on the Fano scheme of $X$. Jordan was the first to study these Galois groups in his work  ``Trait\'{e} des Substitutions et des \'{E}quations Alg\'{e}briques" in which he noted that the Galois group of an enumerative problem must preserve intrinsic structure of the problem \cite{Jordan}. Jordan further instituted the idea that the Galois group of an enumerative problem must be as large as these structures permit. For instance, the Galois group of the problem of lines in $\mathbb{P}^3$ on a cubic surface must preserve the incidences among these lines. From this, Jordan observed this Galois group is a subgroup of the Coxeter group $E_6$, and it has since been shown that this Galois group is equal to $E_6$ by Harris.

Harris observed that the algebraic Galois groups Jordan defined are geometric monodromy groups, an idea tracing back to Hermite \cite{Hermite}. Using this, he generalized Jordan's work by studying the Galois group of the problem of lines in $\mathbb{P}^n$ on a hypersurface of degree $2n-3$. After showing Jordan's inclusion to be an equality (the case $n=3$), he showed that for $n\ge 4$ the Galois group is the symmetric group acting on a general Fano scheme---such a Galois group is called fully symmetric. To show this, Harris proved these Galois groups are two--transitive and showed they contain a simple transposition. This simple transposition is the result of an explicitly constructed point whose local monodromy yields a simple transposition. We use Harris' technique of producing a simple transposition to determine Galois groups of other Fano problems.

The study of Galois groups of Fano problems then laid dormant until Hashimoto and Kadets nearly determined them in all cases \cite{HK}. They first proved the Fano problem of $r$--planes in $\mathbb{P}^{2r+2}$ on the intersection of two quadrics has Galois group equal to the Coxeter group $D_{2r+3}$ for $r\ge 1$. Then it was shown that these Fano problems and the problem of lines on a cubic surface are enriched in the sense that these are the only Fano problems where the $r$--planes of the Fano scheme intersect. Using this, it was shows that all other Fano problems have Galois group that is highly transitive and contains the alternating group---such a Galois group is said to be at least alternating.

By the results of Hashimoto and Kadets, the open problem of classifying Galois groups of Fano problems rests on determining whether those at least alternating Galois groups of Fano problems are equal to the alternating group or the symmetric group. Towards this goal, we use computational tools to prove Galois groups of Fano problems of moderate size are fully symmetric. We do so by using Harris' technique of exhibiting a simple transposition for these Fano problems. This transposition is the result of producing a system satisfying certain properties and we verify those properties with a mixture of exact computation and numerical certification. 

\begin{center}
\begin{figure}[h]
  \includegraphics[scale=.38]{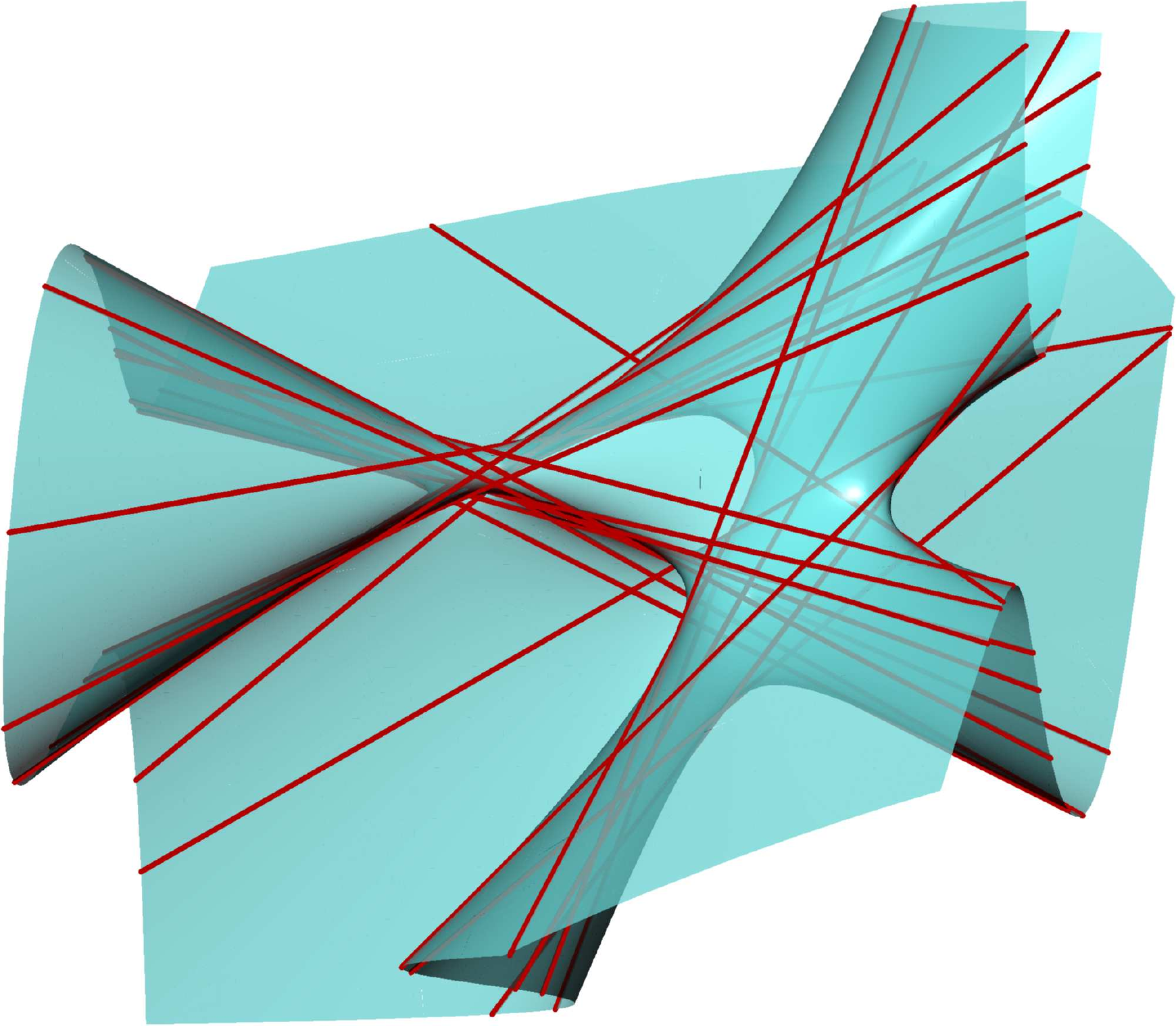}
  \caption{27 lines on a smooth cubic surface}
\end{figure}
\end{center}

\section{Fano Problems}
The family of $r$--planes in $\mathbb{P}^n$ is a complex irreducible projective variety known as the Grassmanian $\gr(r,\mathbb{P}^n)$ and is of dimension $(r+1)(n-r)$. For a variety $X\subseteq\mathbb{P}^n$ its \defcolor{Fano scheme} is the subscheme of $\gr(r,\mathbb{P}^n)$ of $r$--planes that lie on $X$. We study Fano schemes uniformly in the setting that $X$ is a general complete intersection.

When $X\subseteq\mathbb{P}^n$ is a codimension $s$ subvariety, $X$ is the zero locus of homogeneous polynomials $F = (f_1,\dotsc,f_s)$ in $n+1$ variables. Write $d_\bullet = (d_1,\dotsc,d_s)$ for the sequence of respective degrees of these polynomials, $\deg f_i = d_i$. We make the assumption that $d_i\ge 2$ for $i=1,\dotsc,s$, since otherwise some $f_i$ is linear from which we may consider $X\subseteq\mathbb{P}^{n-1}$. Since a general complete intersection $X$ is smooth, it contains no $r$--planes of dimension $r>\frac{1}{2}\dim X$---so we require that $2r\le n-s$ from this point.

Let $\mathbb{C}^{(r,n,d_\bullet)}$ denote the space of systems of complex homogeneous polynomials $F = (f_1,\dotsc,f_s)$ in $n+1$ variables and of respective degrees $d_\bullet = (d_1,\dotsc,d_s)$. The zero set of a general system $F\in\mathbb{C}^{(r,n,d_\bullet)}$ defines a smooth complete intersection in $\mathbb{P}^n$, whose Fano scheme of $r$--planes we consider. For a system $F\in\mathbb{C}^{(r,n,d_\bullet)}$, write $\mathcal{V}_r(F)\subseteq\mathbb{G}(r,\mathbb{P}^n)$ for the Fano scheme of $r$--planes on its zero set in $\mathbb{P}^n$. We say a Fano scheme is general if it is the Fano scheme of a general system $F\in\mathbb{C}^{(r,n,d_\bullet)}$. Invariants such as dimension and degree of general Fano schemes are determined by the combinatorial data $(r,n,d_\bullet)$, called the \defcolor{type} of the Fano scheme.

We determine the expected dimension of a Fano scheme of a given type $(r,n,d_\bullet)$. Fix $F = (f_1,\dotsc,f_s)\in\mathbb{C}^{(r,n,d_\bullet)}$. We note that $\ell\in\mathcal{V}_r(F)$ exactly when $F|_\ell = 0$, or equivalently $f_i|_\ell=0$ for $i=1,\dotsc,s$. Each restriction $f_i|_\ell$ is a degree $d_i$ form on $\ell$, and the space of such forms has dimension $\left(\begin{smallmatrix}d_i+r\\r\end{smallmatrix}\right)$. As such, we expect the vanishing of these forms to have codimension $\sum_{i=1}^s \left(\begin{smallmatrix}d_i+r\\r\end{smallmatrix}\right)$ and the dimension of $\mathcal{V}_r(F)$ to be
\begin{align*}
\defcolor{\delta(r,n,d_\bullet)} = (r+1)(n-r) - \sum_{i=1}^s\begin{pmatrix}d_i+r\\r\end{pmatrix}.
\end{align*}
There is an alternative view of this description of the Fano scheme $\mathcal{V}_r(F)$. Each $F\in\mathbb{C}^{(r,n,d_\bullet)}$ determines a section of a certain vector bundle (this is a direct sum of symmetric powers of the dual of the tautological bundle on $\mathbb{G}(r,\mathbb{P}^n)$) and the Fano scheme $\mathcal{V}_r(F)$ is the vanishing locus of this section. This vector bundle has rank $\sum_{i=1}^s \left(\begin{smallmatrix}d_i+r\\r\end{smallmatrix}\right)$ and thus we arrive at the same expected dimension. Debarre and Manivel \cite{DM} take this view to show this expected dimension is the dimension for general Fano schemes.

\begin{theorem}[Debarre,Manivel]
The general Fano scheme of type $(r,n,d_\bullet)$ is non-empty, smooth, and has dimension $\delta(r,n,d_\bullet)$ when $\delta(r,n,d_\bullet)\ge 0$ and is empty otherwise.
\end{theorem}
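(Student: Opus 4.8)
The plan is to realize $\mathcal{V}_r(F)$ as the zero locus of a section of a globally generated vector bundle on the Grassmanian and then invoke a Bertini-type argument. Following the bundle description already noted in the excerpt, let $\mathcal{S}$ denote the tautological rank-$(r+1)$ subbundle on $G := \gr(r,\mathbb{P}^n)$, with quotient $\mathcal{Q}$, and set
\[
\mathcal{E} = \bigoplus_{i=1}^s \operatorname{Sym}^{d_i}\mathcal{S}^*, \qquad e := \operatorname{rank}\mathcal{E} = \sum_{i=1}^s \binom{d_i+r}{r}.
\]
Each $F=(f_1,\dotsc,f_s)\in\mathbb{C}^{(r,n,d_\bullet)}$ determines a section $s_F$ of $\mathcal{E}$ whose value at $\ell$ is the tuple of restrictions $(f_i|_\ell)_i$, and $\mathcal{V}_r(F)$ is exactly its zero locus. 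Since $\dim G=(r+1)(n-r)$, the expected dimension of $\{s_F=0\}$ is $(r+1)(n-r)-e=\delta(r,n,d_\bullet)$, matching the claim.

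First I would record that the assignment $F\mapsto s_F$ generates $\mathcal{E}$: for each $\ell\in G$ the evaluation $F\mapsto s_F(\ell)=(f_i|_\ell)_i$ into the fiber $\mathcal{E}_\ell$, the space of tuples of forms of degrees $d_\bullet$ on $\ell\cong\mathbb{P}^r$, is surjective, because every such tuple is the restriction of some $F$ on $\mathbb{P}^n$. Next I would form the universal vanishing locus
\[
\Phi = \{(F,\ell)\in\mathbb{C}^{(r,n,d_\bullet)}\times G : s_F(\ell)=0\}
\]
and study the projection $p_2\colon\Phi\to G$. By the surjectivity just noted, the fiber of $p_2$ over $\ell$ is the kernel of the evaluation map, a linear subspace of codimension exactly $e$; as $G$ is smooth and irreducible and all fibers are affine spaces of equal dimension, $\Phi$ is smooth and irreducible with
\[
\dim\Phi = \dim\mathbb{C}^{(r,n,d_\bullet)} + (r+1)(n-r) - e = \dim\mathbb{C}^{(r,n,d_\bullet)} + \delta(r,n,d_\bullet).
\]

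The theorem then follows from the second projection $p_1\colon\Phi\to\mathbb{C}^{(r,n,d_\bullet)}$, whose fiber over $F$ is $\mathcal{V}_r(F)$. When $\delta<0$ we have $\dim\Phi<\dim\mathbb{C}^{(r,n,d_\bullet)}$, so $p_1$ cannot be dominant and $\mathcal{V}_r(F)=\emptyset$ for general $F$. When $\delta\ge 0$, once $p_1$ is known to be dominant, generic smoothness over $\mathbb{C}$ provides a dense open set of $F$ over which $p_1$ is a smooth morphism, so that $\mathcal{V}_r(F)$ is smooth, non-empty, and of dimension $\dim\Phi-\dim\mathbb{C}^{(r,n,d_\bullet)}=\delta$, as claimed.

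The main obstacle is precisely establishing dominance of $p_1$ when $\delta\ge 0$, equivalently that a general complete intersection of type $d_\bullet$ really does contain an $r$-plane; this is not forced by the dimension count for $\Phi$, since a priori the image of $p_1$ could be a proper subvariety carrying fibers of excess dimension. Two routes present themselves. The cohomological route identifies the class of a general fiber with the top Chern class $c_e(\mathcal{E})$ and shows $c_e(\mathcal{E})\neq 0$ in $H^*(G)$, using positivity of the globally generated bundle $\mathcal{E}$ together with an explicit computation of $c_e(\operatorname{Sym}^{d_i}\mathcal{S}^*)$ via the splitting principle and Schubert calculus. The more hands-on route, which I would favor, is to exhibit a single pair $(F_0,\ell_0)\in\Phi$ at which $s_{F_0}$ is transverse to the zero section; unwinding the identification $T_{\ell_0}G\cong\operatorname{Hom}(\mathcal{S}_{\ell_0},\mathcal{Q}_{\ell_0})$, transversality becomes surjectivity of the explicit normal-derivative map $\operatorname{Hom}(\mathcal{S}_{\ell_0},\mathcal{Q}_{\ell_0})\to\mathcal{E}_{\ell_0}$, a linear-algebra condition that can be met whenever $\delta\ge 0$ by choosing the $f_i$ to have suitably generic behavior in the directions normal to $\ell_0$. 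Surjectivity of $dp_1$ at such a point yields both dominance and, through generic smoothness, the smoothness and dimension assertions simultaneously.
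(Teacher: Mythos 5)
First, a point of comparison: the paper does not prove this theorem at all --- it is quoted from Debarre--Manivel \cite{DM}, and the paper only sketches the vector-bundle point of view you adopt. So your proposal must be judged against the standard argument of \cite{DM}, and its framework --- the bundle $\mathcal{E}=\bigoplus_i\operatorname{Sym}^{d_i}\mathcal{S}^*$, the incidence variety $\Phi$ realized as a vector bundle over $\mathbb{G}(r,\mathbb{P}^n)$, the dimension count, the case $\delta<0$ via non-dominance, and generic smoothness in characteristic zero --- is exactly the right skeleton and matches that argument.

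The genuine gap is the step you yourself flag as the main obstacle: dominance of $p_1$ when $\delta\ge 0$. Your preferred route (b) does not close it; it restates it. Unwinding the identification $T_{\ell_0}\mathbb{G}(r,\mathbb{P}^n)\cong\operatorname{Hom}(\mathcal{S}_{\ell_0},\mathcal{Q}_{\ell_0})$ and writing $f_i\equiv\sum_{j>r}x_jg_{ij}$ modulo $(x_{r+1},\dotsc,x_n)^2$, the normal-derivative map sends $\phi=(\phi_{r+1},\dotsc,\phi_n)$, with each $\phi_j$ a linear form on $\ell_0$, to $\bigl(\sum_{j}(g_{ij}|_{\ell_0})\,\phi_j\bigr)_i$. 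So exhibiting a transverse pair is the assertion that one can choose forms $g_{ij}$ of degree $d_i-1$ on $\ell_0\cong\mathbb{P}^r$ making this multiplication-type map surjective onto $\bigoplus_i\operatorname{Sym}^{d_i}\mathcal{S}^*_{\ell_0}$ whenever $(n-r)(r+1)\ge\sum_i\binom{d_i+r}{r}$. That is a maximal-rank statement about multiplication by generic forms; it is true, but ``suitably generic behavior in the directions normal to $\ell_0$'' is not an argument for it. Already for $(1,3,(3))$ it is the nontrivial fact that two coprime binary quadrics generate all binary cubics (nonvanishing of a Sylvester resultant), and for general $(r,d_\bullet)$ you offer no proof. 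Your route (a) is the one Debarre--Manivel actually execute, and it is both stronger and cleaner than you indicate: $c_e(\mathcal{E})\neq 0$ forces \emph{every} section of $\mathcal{E}$ to vanish somewhere, hence $p_1$ is surjective --- dominance comes for free --- and generic smoothness then finishes the proof. But the required nonvanishing is itself a computation (Schur-positivity of the expansion of $Q_{r,d_\bullet}(x)V(x)$; in the case $\delta=0$ it is precisely the positivity of $\deg(r,n,d_\bullet)$, which the paper's Proposition bounds below by $\prod_i d_i^{r+1}>0$), and you do not carry it out. As written, neither route is completed, so the crux of the theorem --- non-emptiness of the general Fano scheme when $\delta\ge 0$ --- remains unproven.
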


We say the \defcolor{Fano problem} $(r,n,d_\bullet)$ is the problem of describing a general Fano scheme of type $(r,n,d_\bullet)$, where $\delta(r,n,d_\bullet)=0$. That is, a Fano problem is that of describing a finite general Fano scheme.

Since the Grassmanian is a projective variety, a Fano scheme has a well-defined degree. For a fixed tuple $(r,n,d_\bullet)$, there is a generic degree given as the maximal degree of a Fano scheme of this type. For a Fano problem $(r,n,d_\bullet)$, this degree is written \defcolor{$\deg(r,n,d_\bullet)$} and is the cardinality of a general Fano scheme of type $(r,n,d_\bullet)$. By describing the Fano scheme $\mathcal{V}_r(F)$ as the vanishing locus of a section of a certain vector bundle, the degree of a Fano problem can be computed by a chern class computation. Debarre and Manivel explicitly computed this quantity using the splitting lemma and properties of Schur polynomials. 

Define the quantities
\begin{align*}
Q_{r,d}(x) = \prod_{\stackrel{a_i\in\mathbb{Z}_{\ge 0}}{a_0+\dotsb+a_r=d}}(a_0x_0 + \dotsb + a_rx_r)\in\mathbb{Z}[x_0,\dotsb,x_r]
\end{align*}
and $Q_{r,d_\bullet}(x) = Q_{r,d_0}(x)\dotsb Q_{r,d_s}(x)$, as well as the Vandermonde polynomial
\begin{align*}
V(x) = \prod_{0\le i<j\le r}(x_i-x_j).
\end{align*}

\begin{theorem}[Debarre,Manivel]
The degree of the Fano problem $(r,n,d_\bullet)$, $\deg(r,n,d_\bullet)$, is equal to the coefficient of $x_0^n x_1^{n-1}\dotsb x_r^{n-r}$ in the product $Q_{r,d_\bullet}(x)V(x)$. 
\end{theorem}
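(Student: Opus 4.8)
The plan is to realize $\deg(r,n,d_\bullet)$ as the integral over the Grassmannian of the top Chern class of the vector bundle whose sections cut out Fano schemes, and then to evaluate that integral by a coefficient extraction. Write $\mathbb{G} = \mathbb{G}(r,\mathbb{P}^n)$, let $S$ be the tautological rank-$(r+1)$ subbundle, and set $E = \bigoplus_{i=1}^s \mathrm{Sym}^{d_i}(S^*)$. As observed before the dimension theorem, each $F\in\mathbb{C}^{(r,n,d_\bullet)}$ determines a section $\sigma_F$ of $E$ whose value at $\ell\in\mathbb{G}$ is the tuple of restrictions $(f_i|_\ell)_i$, so that its zero locus is exactly $\mathcal{V}_r(F)$. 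The bundle $E$ has rank $\sum_i \binom{d_i+r}{r} = (r+1)(n-r) = \dim\mathbb{G}$, the last equality because $\delta(r,n,d_\bullet)=0$ for a Fano problem. For general $F$ the dimension theorem of Debarre and Manivel guarantees $\mathcal{V}_r(F)$ is smooth of dimension $0$, hence a finite reduced set, so its cardinality $\deg(r,n,d_\bullet)$ equals the Euler number $\int_{\mathbb{G}} c_{\mathrm{top}}(E)$. It remains to compute this integral.

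First I would compute $c_{\mathrm{top}}(E)$ by the splitting principle. Introduce Chern roots $x_0,\dots,x_r$ for $S^*$, so $c(S^*)=\prod_{i}(1+x_i)$. For a bundle with Chern roots $x_0,\dots,x_r$, the symmetric power $\mathrm{Sym}^{d}(S^*)$ has as Chern roots the $\binom{d+r}{r}$ linear forms $a_0x_0+\dots+a_rx_r$ indexed by $a_i\in\mathbb{Z}_{\ge 0}$ with $a_0+\dots+a_r=d$. Its top Chern class is the product of these roots, which is precisely $Q_{r,d}(x)$; since direct sums multiply top Chern classes, $c_{\mathrm{top}}(E)=Q_{r,d_\bullet}(x)$. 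This is a symmetric polynomial in the $x_i$, hence a genuine class in $H^*(\mathbb{G})$, homogeneous of degree $\sum_i\binom{d_i+r}{r}=(r+1)(n-r)$, the top degree.

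The remaining and main step is the push-forward to a point: for a symmetric polynomial $P(x_0,\dots,x_r)$ of top degree, I claim $\int_{\mathbb{G}}P$ equals the coefficient of $x_0^n x_1^{n-1}\dotsb x_r^{n-r}$ in $P(x)V(x)$. Here I would use the Schubert–Schur dictionary: the Schur polynomials $s_\lambda(x_0,\dots,x_r)$ with $\lambda$ in the $(r+1)\times(n-r)$ box represent the Schubert basis of $H^*(\mathbb{G})$, with $\int_{\mathbb{G}}s_\lambda=1$ exactly when $\lambda$ is the full box $((n-r)^{r+1})$ and $0$ otherwise, while out-of-box Schur polynomials (those with more than $r+1$ parts, or width exceeding $n-r$) vanish in $H^*(\mathbb{G})$. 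Expanding $P=\sum_\lambda c_\lambda s_\lambda$ gives $\int_{\mathbb{G}}P=c_{((n-r)^{r+1})}$. To isolate this coefficient I would invoke the bialternant identity $s_\lambda(x)V(x)=a_{\lambda+\delta}(x)$, where $\delta=(r,r-1,\dots,0)$ and $a_\mu(x)=\det(x_i^{\mu_j})$, so that $P(x)V(x)=\sum_\lambda c_\lambda a_{\lambda+\delta}(x)$. For the full box, $\lambda+\delta=(n,n-1,\dots,n-r)$, whose strictly decreasing arrangement yields the monomial $x_0^n x_1^{n-1}\dotsb x_r^{n-r}$ with coefficient $+1$; distinct partitions give alternants with disjoint monomial supports (a strictly decreasing tuple is determined by the set of its entries), so no other $a_{\lambda+\delta}$ contributes this monomial. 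Hence its coefficient in $PV$ equals $c_{((n-r)^{r+1})}=\int_{\mathbb{G}}P$, and taking $P=Q_{r,d_\bullet}$ proves the theorem.

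I expect the delicate part to be this last step: correctly matching the target monomial $x_0^n x_1^{n-1}\dotsb x_r^{n-r}$ with the point class, and checking that the degree bookkeeping cooperates so that only the full box survives both the integral and the coefficient extraction. The Chern-class computation of the first two paragraphs is routine splitting-principle manipulation; the genuine content lies in the Jacobi–Trudi/Vandermonde identity that converts a Grassmannian integral into a single monomial coefficient of $Q_{r,d_\bullet}(x)V(x)$.
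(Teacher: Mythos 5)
Your proposal is correct, and it takes the same route the paper points to: the paper does not prove this theorem itself but cites Debarre--Manivel, summarizing their argument as exactly the Chern-class computation you carried out (the Fano scheme as the zero locus of a section of $E=\bigoplus_i\mathrm{Sym}^{d_i}(S^*)$, the splitting principle giving $c_{\mathrm{top}}(E)=Q_{r,d_\bullet}(x)$, and Schur-polynomial properties to extract the coefficient of $x_0^nx_1^{n-1}\dotsb x_r^{n-r}$). Your write-up soundly fills in the details of that sketch, in particular the bialternant identity $s_\lambda(x)V(x)=a_{\lambda+\delta}(x)$ and the observation that only the full-box alternant can contribute the target monomial, which is precisely the step converting the Grassmannian integral into a single coefficient.
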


When a Fano scheme $\mathcal{V}_r(F)$ of a Fano problem is finite (as is expected), there are $\deg(r,n,d_\bullet)$ many points of the Fano scheme counting multiplicity. Table \ref{Small Fano} shows some Fano problems of small degree. Note the first two rows are the familiar problem of 16 lines in $\mathbb{P}^4$ on the intersection of two quadric hypersurfaces and the problem of 27 lines in $\mathbb{P}^3$ on a cubic hypersurface.

\begin{table}[htb]
  \caption{Fano problems of small degree}
  \label{Small Fano}
  \def\arraystretch{1.2}
  \begin{tabular}{||c|c|c|c|c||}
    \hline
    $~r~$ & $~n~$ & $~d_\bullet~$ & $~\deg(r,n,d_\bullet)~$ & ~Galois Group~\\
    \hline\hline
    1 & 4 & $(2,2)$ & 16 & $D_5$\\
    \hline
    1 & 3 & $(3)$ & 27 & $E_6$\\
    \hline
    2 & 6 & $(2,2)$ & 64 & $D_7$\\
    \hline
    3 & 8 & $(2,2)$ & 256 & $D_9$\\
    \hline
    1 & 7 & $(2,2,2,2)$ & 512 & $S_{512}$\\
    \hline
    1 & 6 & $(2,2,3)$ & 720  & $S_{720}$\\
    \hline
    4 & 10 & $(2,2)$ & 1024 & $D_{11}$\\
    \hline
    2 & 8 & $(2,2,2)$ & 1024  & $S_{1024}$\\
    \hline
  \end{tabular}
\end{table}

Lower bounds exist for $\deg(r,n,d_\bullet)$, which can then be used to enumerate Fano problems with degree less than a given amount. For instance, we use the following lower bounds on the degree of a Fano problem to enumerate Fano problems.

\begin{proposition}
Let $(r,n,d_\bullet)$ be a Fano problem. We have
\begin{align*}
\deg(r,n,d_\bullet) \ge \prod_{1\le i\le s\,} \prod_{\stackrel{1\le j\le r+1}{j\mid d_i}} \left(\frac{d_i}{j}\right)^{\left(\begin{smallmatrix}r+1\\j\end{smallmatrix}\right)} \ge \prod_{1\le i\le s} d_i^{r+1}.
\end{align*}
\end{proposition}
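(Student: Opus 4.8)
The plan is to combine the Debarre--Manivel degree formula with an elementary integrality argument, avoiding Schur functions entirely. The rightmost inequality is immediate, so I would dispatch it first: in the middle product, the factor indexed by $j=1$ equals $(d_i/1)^{\binom{r+1}{1}}=d_i^{r+1}$, while every factor with $j\ge 2$ satisfies $(d_i/j)^{\binom{r+1}{j}}\ge 1$, since $j\mid d_i$ forces $d_i/j$ to be a positive integer and the exponent is nonnegative. Multiplying over $i$ shows the middle quantity dominates $\prod_i d_i^{r+1}$, so only the left inequality requires genuine work.

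For the left inequality I would isolate the scalar $\prod_{1\le i\le s}\prod_{j\mid d_i,\,j\le r+1}(d_i/j)^{\binom{r+1}{j}}$ directly from the factored form of $Q_{r,d_\bullet}(x)$. Call a composition $a$ with $a_0+\cdots+a_r=d$ \emph{balanced} if it is constant on its support, i.e.\ $a_k=d/j$ for all $k$ in some $j$-element set $T\subseteq\{0,\ldots,r\}$ and $a_k=0$ otherwise; such $a$ exist precisely when $j\mid d$. The corresponding linear factor of $Q_{r,d}(x)$ is $(d/j)\sum_{k\in T}x_k$, so collecting the balanced factors over all admissible $j$ (there are $\binom{r+1}{j}$ choices of $T$ for each $j$) pulls out exactly the scalar $c_d:=\prod_{j\mid d,\,j\le r+1}(d/j)^{\binom{r+1}{j}}$ together with the monic forms $\sum_{k\in T}x_k$. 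Since every remaining, non-balanced factor $a_0x_0+\cdots+a_rx_r$ has nonnegative integer coefficients, I can write $Q_{r,d}(x)=c_d\,\hat Q_d(x)$ with $\hat Q_d\in\ZZ[x_0,\ldots,x_r]$, and multiplying over the $s$ degrees gives $Q_{r,d_\bullet}(x)=\big(\prod_i c_{d_i}\big)\hat Q(x)$ with $\hat Q\in\ZZ[x_0,\ldots,x_r]$ and $\prod_i c_{d_i}$ a positive integer.

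To finish I would feed this factorization into the degree formula. By the Debarre--Manivel degree theorem, $\deg(r,n,d_\bullet)$ is the coefficient of $x_0^nx_1^{n-1}\cdots x_r^{n-r}$ in $Q_{r,d_\bullet}(x)V(x)$, so
\begin{align*}
\deg(r,n,d_\bullet)=\Big(\prod_i c_{d_i}\Big)\cdot N,\qquad N:=\big[x_0^n\cdots x_r^{n-r}\big]\big(\hat Q(x)V(x)\big).
\end{align*}
Because $\hat Q$ and $V(x)$ have integer coefficients, $N$ is an integer; and because a Fano problem has $\delta(r,n,d_\bullet)=0$, the Debarre--Manivel dimension theorem guarantees the general Fano scheme is nonempty and finite, so $\deg(r,n,d_\bullet)\ge 1$. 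From $\deg(r,n,d_\bullet)=\big(\prod_i c_{d_i}\big)N$ with $\deg(r,n,d_\bullet)\ge 1$ and $\prod_i c_{d_i}\ge 1$ I conclude $N\ge 1$, whence $\deg(r,n,d_\bullet)\ge\prod_i c_{d_i}$, which is exactly the left inequality.

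The one delicate point is the bookkeeping in the factorization: I must check that distinct pairs $(j,T)$ index distinct compositions, so that no balanced factor is counted twice and $\binom{r+1}{j}$ is genuinely the number of forms extracted for each $j$. This holds because a balanced composition $a$ recovers both its support $T=\{k:a_k\ne0\}$ and the common value $d/j=a_k$, hence $j=|T|$, so the assignment $(j,T)\mapsto a$ is injective. Once the scalar is correctly isolated, the positivity conclusion $N\ge1$ and the final inequality follow formally.
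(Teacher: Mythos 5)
Your proof is correct and takes essentially the same approach as the paper: you identify the ``balanced'' linear factors of $Q_{r,d_i}(x)$, pull out the scalar $\prod_{j\mid d_i,\,j\le r+1}(d_i/j)^{\binom{r+1}{j}}$, and conclude via the degree formula. You are in fact more careful than the paper's terse argument, which leaves implicit both the injectivity bookkeeping and the final step that the remaining integer coefficient $N$ is at least $1$ because the Fano scheme is nonempty.
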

\begin{proof}
For the first inequality, consider the product expansion of $Q_{r,d_i}(x)$ for fixed $i$. For each $1\le j\le r+1$ such that $j\mid d_i$, there are $\left(\begin{smallmatrix}r+1\\j\end{smallmatrix}\right)$ many terms of the product with $j$ of the $a_k$ equal to $d_i/j$. The quantity $d_i/j$ then factors out of each of these ${\left(\begin{smallmatrix}r+1\\j\end{smallmatrix}\right)}$ many terms and appears as a constant factor of $Q_{r,d_i}(x)$ with this respective power. Hence this is a factor of every term of $Q_{r,d_\bullet}(x)V(x)$ and of $\deg(r,n,d_\bullet)$. 
\end{proof}

\section{Galois Groups of Fano Problems}
A Fano problem $(r,n,d_\bullet)$ determines an incidence correspondence.
\begin{center}
\begin{tikzpicture}
\node at (0,1.5) {$\Gamma = \left\{ (F,\ell)\in\mathbb{C}^{(r,n,d_\bullet)}\times\gr(r,\mathbb{P}^n):F|_\ell=0 \right\}$};
\draw[->] (-3.72,1.2)--(-3.72,.3) node [left] at (-3.66,.75) {\defcolor{$\pi_{(r,n,d_\bullet)}$}};
\node at (-3.68,0) {$\mathbb{C}^{(r,n,d_\bullet)}$};
\draw[->] (4.05,1.5)--(5.05,1.5);
\node at (6,1.5) {$\mathbb{G}(r,\mathbb{P}^n)$};
\end{tikzpicture}
\end{center}
The map $\rho$ realizes the incidence variety $\Gamma$ as a vector bundle over $\mathbb{G}(r,\mathbb{P}^n)$. Indeed, the fiber over $\ell\in\mathbb{G}(r,\mathbb{P}^n)$ is the kernel of the surjective linear map restricting homogeneous forms $F = (f_1,\dotsc,f_s)$ of respective degrees $d_\bullet$ to homogeneous forms on $\ell$ of the same respective degrees. The codimension of these fibers in $\mathbb{C}^{(r,n,d_\bullet)}$ is then equal to $\sum_{i=1}^s \left(\begin{smallmatrix}d_i+r\\r\end{smallmatrix}\right)=(r+1)(n-r)$; it follows that $\Gamma$ is smooth and of dimension $\dim \mathbb{C}^{(r,n,d_\bullet)}$. 

Given $F\in\mathbb{C}^{(r,n,d_\bullet)}$, the fiber $\pi_{(r,n,d_\bullet)}^{-1}(F)$ is the Fano scheme $\mathcal{V}_r(F)$. By the results of Debarre and Manivel, there is a Zariski open set $U\subseteq\mathbb{C}^{(r,n,d_\bullet)}$ (in particular dense, open, and path-connected) with the property that if $F\in U$, the fiber $\pi_{(r,n,d_\bullet)}^{-1}(F)$ consists of $\deg(r,n,d_\bullet)$ smooth points. It follows that the restriction of $\pi_{(r,n,d_\bullet)}$ to $\pi_{(r,n,d_\bullet)}^{-1}(U)$ is a smooth covering space of degree $\deg(r,n,d_\bullet)$. 

Fix a base point $F\in U$. Every directed loop in $U$ based at $F$ lifts to $\deg(r,n,d_\bullet)$ paths in $\Gamma$ each starting at distinct poitns of the fiber $\pi_{(r,n,d_\bullet)}^{-1}(F)$; the endpoints of these paths determine a permutation of this fiber. The set of all permutations obtained this way is the monodromy group of $\pi_{(r,n,d_\bullet)}$ and is defined up to isomorphism for different choices of base point $F\in U$ and reordering of the fiber $\pi_{(r,n,d_\bullet)}^{-1}(F)$. The monodromy group of $\pi_{(r,n,d_\bullet)}$ acts transitively on the fiber $\pi_{(r,n,d_\bullet)}^{-1}(F)$ since the incidence variety $\Gamma$ is irreducible, while higher transitivity is equivalent to irreducibility of fiber products of the incidence variety with itself \cite{ngalois,GGEGA}. More detail about monodromy groups can be found in \cite{Hatcher}. 

\begin{definition}
The \defcolor{Galois group $\gal_{(r,n,d_\bullet)}$} of the Fano problem $(r,n,d_\bullet)$ is the monodromy group of the restriction of $\pi_{(r,n,d_\bullet)}$ to a covering space. 
\end{definition}

These Galois groups were first defined algebraically by Jordan \cite{Jordan}. The map $\pi_{(r,n,d_\bullet)}:\Gamma\to\mathbb{C}^{(r,n,d_\bullet)}$ induces a reverse inclusion of the function fields of these varieties $\mathbb{C}(\mathbb{C}^{(r,n,d_\bullet)})\hookrightarrow\mathbb{C}(\Gamma)$. This expresses $\mathbb{C}(\Gamma)$ as an algebraic extension of $\mathbb{C}(\mathbb{C}^{(r,n,d_\bullet)})$ of degree $\deg(r,n,d_\bullet)$. The Galois group of the Fano problem $(r,n,d_\bullet)$ defined by Jordan is the Galois group of the normal closure of this field extension. The equivalence of the geometric definition with this algebraic definition was shown by Harris \cite{Harris}, but traces back to Hermite \cite{Hermite}. 

Jordan considered the problem of lines in $\mathbb{P}^3$ on a cubic surface and its Galois group, $\gal_{(1,3,(3))}$. He observed that the Galois group acting on a given Fano scheme must preserve the incidence structure of the lines, and so is a subgroup of the Coxeter group $E_6$. Harris later proved Jordan's inclusion to be an equality, $\gal_{(1,3,(3))} = E_6$, and studied a generalization of this problem. He showed that for $n\ge 4$, the Fano problem of lines in $\mathbb{P}^n$ on a hypersurface of degree $2n-3$ has Galois group equal to the symmetric group \cite{Harris}, such a Galois group is called \defcolor{fully symmetric}. To prove his result, Harris observed for $n\ge 4$ the Galois groups $\gal_{(1,n,(2n-3))}$ are highly transitive and observed systems $F\in\mathbb{C}^{(1,n,(2n-3))}$ whose local monodromy generates a simple transposition.

Hashimoto and Kadets later took up the study of these Galois groups more generally. They studied the Fano problem of $r$--planes in $\mathbb{P}^{2r+2}$ on the intersection of two quadrics for $r\ge 1$ and determined the Galois group to be the Coxeter group $\gal_{(r,2r+2,(2,2))} = D_{2r+3}$. It was then shown that the Fano problems of lines in $\mathbb{P}^3$ on a cubic surface, $(1,3,(3))$, and $r$--planes in $\mathbb{P}^{2r+2}$ on the intersection of two quadric hypersurfaces, $(r,2r+2,(2,2))$ for $r\ge 1$ are special in the sense that these are the only Fano problems where the $r$--planes of a general Fano scheme intersect. Further, Hashimoto and Kadets were able to prove claims about those Fano problems $(r,n,d_\bullet)$ not equal to $(1,3,(3))$ or $(r,2r+2,(2,2))$ for $r\ge 1$. These Galois groups were shown to be highly transitive, and classical results from group theory were used to prove the following.

\begin{theorem}[Hashimoto, Kadets]
If $(r,n,d_\bullet)$ is a Fano problem not equal to $(1,3,(3))$ or $(r,2r+2,(2,2))$ for some $r\ge 1$, then $\gal_{(r,n,d_\bullet)}$ contains the alternating group.
\end{theorem}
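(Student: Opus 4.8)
The plan is to regard $\gal_{(r,n,d_\bullet)}$ as a transitive subgroup $G\le S_N$ permuting the $N=\deg(r,n,d_\bullet)$ points of a general Fano scheme, and to force $A_N\le G$ by first establishing a high degree of transitivity and only then invoking group theory. As recorded in the discussion of $\pi_{(r,n,d_\bullet)}$, the group $G$ is $k$-transitive exactly when the $k$-fold fiber power of $\Gamma$ over $\mathbb{C}^{(r,n,d_\bullet)}$, restricted to the locus of $k$-tuples of \emph{distinct} $r$-planes, is irreducible. The geometric heart of the argument is therefore an irreducibility statement, and the classification of multiply transitive groups is applied purely as a black box at the end.

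To establish $k$-transitivity I would study the incidence variety $\Gamma^{(k)}=\{(F,\ell_1,\dotsc,\ell_k): \ell_i \text{ pairwise distinct},\ F|_{\ell_i}=0\}$ and project it to the configuration space of $k$-tuples of distinct $r$-planes in $\mathbb{G}(r,\mathbb{P}^n)$, which is irreducible. The fiber over $(\ell_1,\dotsc,\ell_k)$ is the linear space of $F\in\mathbb{C}^{(r,n,d_\bullet)}$ vanishing on every $\ell_i$, namely the kernel of the joint restriction map $\mathbb{C}^{(r,n,d_\bullet)}\to\bigoplus_{i=1}^k\bigoplus_{j=1}^s H^0(\mathcal{O}_{\ell_i}(d_j))$. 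If this map is surjective for a general tuple, so that the fibers have constant dimension, then $\Gamma^{(k)}$ is a vector bundle over an irreducible base and is itself irreducible, yielding $k$-transitivity. The crucial input is thus the \emph{independence} of the vanishing conditions imposed by $k$ general $r$-planes, which holds precisely when those planes may be taken pairwise disjoint and in general position. This is exactly where the hypothesis enters: the excluded problems $(1,3,(3))$ and $(r,2r+2,(2,2))$ are the enriched cases in which the general Fano planes are forced to meet, destroying the independence and hence the irreducibility. A dimension count comparing $\dim\mathbb{C}^{(r,n,d_\bullet)}=\sum_{i=1}^s\binom{n+d_i}{d_i}$ against the $(r+1)(n-r)$ conditions imposed per plane shows that, away from the enriched families, many disjoint planes impose independent conditions, so $k$ can be pushed well past $6$.

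With $k$-transitivity in hand for $k$ large, I would conclude via the classification of multiply transitive permutation groups that $G$ is either $A_N$ or $S_N$, and in particular that $G\supseteq A_N$. The Mathieu groups are ruled out because they occur only in degrees $11,12,23,24$ and with transitivity at most $5$, which one checks against the degree formula for $\deg(r,n,d_\bullet)$ and against the transitivity bound obtained above. I expect the main obstacle to be the uniform irreducibility, that is, quantifying exactly how many general $r$-planes impose independent conditions across the infinite families, verifying that this count exceeds the threshold needed to eliminate every exceptional multiply transitive group, and cleanly isolating the two enriched families where disjointness—and with it the entire argument—breaks down. Note finally that this method yields only the dichotomy "$A_N$ or $S_N$" and leaves the parity undetermined; deciding whether $G=A_N$ or $G=S_N$ is precisely the finer question attacked afterward by exhibiting a simple transposition.
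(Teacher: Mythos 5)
First, a point of orientation: the paper does not prove this theorem at all --- it is imported from Hashimoto--Kadets \cite{HK}, and the paper's only indication of their method is the sentence that these Galois groups ``were shown to be highly transitive, and classical results from group theory were used.'' Your outline (high transitivity via irreducibility of fiber powers of $\Gamma$, then multiply transitive group theory) is consistent with that summary, and your group-theoretic endgame is essentially fine: by the paper's Proposition, every non-excluded Fano problem has degree at least $\prod_i d_i^{r+1} > 24$, so the Mathieu groups are ruled out by degree alone and even $4$-transitivity would force the group to contain $A_N$ --- though this uses the CFSG-based classification of multiply transitive groups, not anything truly ``classical.''

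The genuine gap is in the geometric step. You assert that if the joint restriction map $\mathbb{C}^{(r,n,d_\bullet)}\to\bigoplus_{i,j}H^0(\mathcal{O}_{\ell_i}(d_j))$ is surjective for a \emph{general} tuple, ``so that the fibers have constant dimension,'' then $\Gamma^{(k)}$ is a vector bundle over the configuration space and hence irreducible. Generic surjectivity does not give constant fiber dimension: over tuples in special position --- above all, tuples with $\ell_i\cap\ell_j\neq\emptyset$ --- the vanishing conditions become dependent and the linear fibers jump, so the projection is not a vector bundle and irreducibility of the base transfers only to the part of $\Gamma^{(k)}$ over the open stratum. What $k$-transitivity actually requires is irreducibility of $\Gamma^{(k)}$ over the open set $U$ where $\pi_{(r,n,d_\bullet)}$ is a covering; there every irreducible component is \'etale over $U$, hence of dimension exactly $\dim\mathbb{C}^{(r,n,d_\bullet)}$, so one must prove that the preimage of every incidence stratum has dimension \emph{strictly less} than $\dim\mathbb{C}^{(r,n,d_\bullet)}$ --- otherwise an entire component of $\Gamma^{(k)}|_U$ can sit over the incidence locus. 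That stratified dimension count is the technical heart of Hashimoto--Kadets' argument, and it is precisely what fails in the excluded cases: for $(1,3,(3))$ and $k=2$, incident pairs form a $7$-dimensional locus in $\mathbb{G}(1,\mathbb{P}^3)^2$ with $13$-dimensional fibers while disjoint pairs give $8+12$, so both strata yield $20$-dimensional components and the pair correspondence is reducible (the monodromy preserves incidence). Your proposal names this obstruction informally but supplies no argument where the argument actually lives. Two further steps also need proofs rather than dimension counts: (i) that $k$ pairwise disjoint $r$-planes in general position impose independent conditions on forms of each degree $d_i$ is a nontrivial interpolation statement (of Hartshorne--Hirschowitz type for $r=1$), not a consequence of comparing dimensions; and (ii) ``$k$ can be pushed well past $6$'' is not uniformly true --- for the all-quadric family $d_\bullet=(2,2,2)$ the ratio $\dim\mathbb{C}^{(r,n,d_\bullet)}/\bigl((r+1)(n-r)\bigr)$ decreases to $25/4$, so the margin above the group-theoretic threshold is razor-thin and the threshold must be taken at $4$ (with the Mathieu exclusion) for the argument to close across all families.
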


Such a Galois group with this property are said to be \defcolor{at least alternating}. The result above states that the only Galois groups of Fano problems which are not at least alternating occur in the special cases of lines in $\mathbb{P}^3$ on a cubic surface and $r$--planes in $\mathbb{P}^{2r+r}$ on the intersection of two quadrics. A complete classification of Galois groups of Fano problems rests on determining whether each at least alternating Galois group of a Fano problem is the alternating group or the symmetric group. The rest of this paper is devoted to proving that many of these at least alternating Galois groups of Fano problems are in fact fully symmetric. We use computational methods to extend Harris' technique of producing a simple transposition in the Galois group, proving they are fully symmetric.

\section{Computational Methods}
Harris showed the Galois group $\gal_{(1,n,(2n-3))}$ contains a simple transposition by producing an instance $F\in\mathbb{C}^{(1,n,(2n-3))}$ with the property that the fiber $\pi_{(1,n,(2n-3))}^{-1}(F)=\mathcal{V}_r(F)$ consists of a unique double point and $\deg(1,n,(2n-3))-2$ smooth points. Since $\Gamma$ and $\mathbb{C}^{(1,n,(2n-3))}$ are smooth, irreducible varieties, the local monodromy around such a point generates a simple transposition as described by Harris \cite{Harris}. 

We describe symbolic and numerical methods of verifying these conditions for a given $F\in\mathbb{C}^{(r,n,d_\bullet)}$ for more general Fano problems $(r,n,d_\bullet)$. The tools we use to verify these conditions are applicable to polynomial systems. We briefly detail our choice of local coordinates on the Grassmanian and how we describe a general Fano scheme as the zeros of a polynomial system in these coordinates.

We use local coordinates determined by the classical Pl\"{u}cker embedding of the Grassmanian, constructed as follows. An $r$--plane in $\mathbb{P}^n$ is the projectivization of an $(r+1)$--plane in $\mathbb{C}^{n+1}$ and can be represented as the column span of a full rank $(n+1)\times(r+1)$ matrix. The Grassmanian $\mathbb{G}(r,\mathbb{P}^n)$ is then the quotient of the space of full rank $(n+1)\times(r+1)$ matrices by the right action of invertible $(r+1)\times(r+1)$ matrices. This right action uniformly scales the maximal minors of a given $(n+1)\times(r+1)$ matrix and the map sending an $r$--plane to the vector of maximal minors of a matrix representing it defines the projective embedding. Now consider the Zariski open subset $U\subseteq\mathbb{G}(r,\mathbb{P}^n)$ of $r$--planes represented by a matrix whose bottom $(r+1)\times(r+1)$ minor is nonzero. Those $r$--planes in $U$ can be uniquely represented by a matrix whose bottom $(r+1)\times(r+1)$ submatrix is the identity matrix, and the entries of the top $(n-r)\times(r+1)$ submatrix give local coordinates for $U$. 

Fix a Fano problem $(r,n,d_\bullet)$ and $F=(f_1,\dotsc,f_s)\in\mathbb{C}^{(r,n,d_\bullet)}$. An $r$--plane $\ell\in U$ is parameterized as the column span of the matrix representing it. Substituting this parameterization into $f_i$, the restriction $f_i|_\ell$ is a degree $d_i$ form in $r+1$ parameters whose vanishing amounts to the vanishing of its $\left(\begin{smallmatrix}d_i+r\\r\end{smallmatrix}\right)$ coefficients. These coefficients are polynomials in the local coordinates for $U$, totaling a system of $\sum_{i=1}^s\left(\begin{smallmatrix}d_i+r\\r\end{smallmatrix}\right)=(n-r)(r+1)$ polynomials in $(n-r)(r+1)$ variables. That is, The $r$--planes $\ell\in U\cap\mathcal{V}_r(F)$ are described by the zero set of a square polynomial system. For general $F$, $\mathcal{V}_r(F)\subseteq U$ and the Fano scheme $\mathcal{V}_r(F)$ is completely described by this system. 

For various Fano problems $(r,n,d_\bullet)$, we will construct $F\in\mathbb{C}^{(r,n,d_\bullet)}$ with the property that the resulting system of polynomials describing the Fano scheme $\mathcal{V}_r(F)$ has an obvious candidate for a double point. We describe a purely symbolic computation to verify that this candidate is in fact a double point, and detail the use of numerical certification to show there are $\deg(r,n,d_\bullet)-2$ smooth zeros.

\subsection*{Simple Double Roots}
Let $G$ be a square polynomial system and denote its Jacobian and Hessian by $DG$ and $D^2G$ respectively. A point $x\in\mathbb{C}^m$ is a \defcolor{simple double zero} of $G$ if $G(x)=0$, $\ker DG(x)$ is spanned by a single non-zero vector $v\in\mathbb{C}^m$, and
\begin{align*}
D^2G(x)(v,v)\not\in\text{Im}\,DG(x).
\end{align*}
Dedieu and Shub studied simple double zeros of square polynomial systems in efforts to extend previous work of Smale. In their work, they show that simple double zeros have multiplicity two and they compute a positive separation bound for simple double zeros of a system from other zeros of the same system, showing that simple double zeros are isolated zeros of multiplicity two \cite{DedieuShub}. When the system $G$, point $x$, and tangent vector $v$ have complex rational coefficients and coordinates, these conditions can be checked with a symbolic computation.

\subsection*{Interval Arithmetic}
One flavor of numerical certification comes from interval arithmetic. A complex interval is the set of tuples of complex numbers such that each the real and imaginary parts of each coordinate lies in an interval. The setwise sum and difference of two complex intervals is again a complex interval, however the setwise product of two complex intervals need not be. Usual arithmetic operations are defined on the space of complex intervals as to contain their setwise counterpart. This allows for the evaluation of polynomials and other functions at complex intervals, the result of which contains the setwise evaluation of the function. More information on complex intervals and their arithmetic can be found in \cite{Mayer}.

The Krawczyk operator of $G$ given a point $x$ and an invertible $m\times m$ matrix $Y$, $K_{G,x,Y}$, is a generalization of the Newton operator that acts on the space of complex intervals. A result due to Rump allows us isolate zeros to $G$ using the Krawczyk operator \cite{Rump}. Older versions of this theorem exist for real intervals, and more refined theorems exist for complex intervals \cite{Moore,BRT}.

\begin{theorem}[Rump]
Let $G$ be a square polynomial system in $m$ variables, $x\in\mathbb{C}^m$ a point, and $Y$ an invertible $m\times m$ matrix. If $I$ is a complex interval such that
\begin{align*}
K_{G,x,Y}(I)\subseteq I,
\end{align*}
then $I$ contains a zero of $G$.
\end{theorem}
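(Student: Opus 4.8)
The plan is to reduce the statement to the Brouwer fixed point theorem by exhibiting a continuous self-map of $I$ whose fixed points are exactly the zeros of $G$ lying in $I$. Recall that the Krawczyk operator has the explicit form
\begin{align*}
K_{G,x,Y}(I) = x - YG(x) + \bigl(\mathbf{1} - Y\cdot DG(I)\bigr)(I-x),
\end{align*}
where $\mathbf{1}$ is the $m\times m$ identity, $DG(I)$ denotes an interval matrix enclosing the Jacobian of $G$ over $I$ (so that $DG(z)\in DG(I)$ for every $z\in I$), and all operations are the interval arithmetic operations described above. Its natural point-valued companion is the Newton-like map $N(z) = z - YG(z)$: since $Y$ is invertible, $N(z) = z$ holds if and only if $G(z) = 0$. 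Thus it suffices to produce a fixed point of $N$ inside $I$, and for this I would first establish that $N$ maps $I$ into itself.

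To prove $N(I)\subseteq I$, I would pass through the Krawczyk operator. Taking $x$ to be the midpoint of $I$ as in the standard construction (so that $x\in I$ and, $I$ being a box, the segment $\{x + t(z-x) : t\in[0,1]\}$ stays in $I$ for every $z\in I$), the fundamental theorem of calculus applied to $t\mapsto G(x+t(z-x))$ gives the integral mean-value representation
\begin{align*}
G(z) = G(x) + \Bigl(\int_0^1 DG\bigl(x+t(z-x)\bigr)\,dt\Bigr)(z-x).
\end{align*}
Substituting this into $N$ and regrouping yields
\begin{align*}
N(z) = x - YG(x) + \Bigl(\mathbf{1} - Y\!\int_0^1 DG\bigl(x+t(z-x)\bigr)\,dt\Bigr)(z-x).
\end{align*}
The averaged Jacobian is a convex combination of the matrices $DG(x+t(z-x))$, each of which lies in the convex interval matrix $DG(I)$; hence the average lies in $DG(I)$ as well. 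Combining this with $z-x\in I-x$ and invoking inclusion isotonicity of interval arithmetic (every real evaluation is contained in the corresponding interval evaluation), I conclude that $N(z)$ lies in the interval on the right-hand side of the displayed formula for $K_{G,x,Y}(I)$. By hypothesis $K_{G,x,Y}(I)\subseteq I$, so $N(z)\in I$ for all $z\in I$.

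Finally, I would invoke Brouwer's fixed point theorem. Identifying $\CC^m$ with $\RR^{2m}$, the complex interval $I$ is a compact convex set homeomorphic to a closed ball, and $N$ is continuous (indeed polynomial), so the containment $N(I)\subseteq I$ furnishes a fixed point $z^*\in I$ with $N(z^*)=z^*$, whence $YG(z^*)=0$ and, by invertibility of $Y$, $G(z^*)=0$. The main obstacle is the interval-arithmetic bookkeeping in the middle step: one must use the integral form of the mean value theorem (the naive identity $G(z)-G(x)=DG(\xi)(z-x)$ fails for vector- and complex-valued maps), verify that the convexity of $DG(I)$ genuinely captures the averaged Jacobian, and confirm that inclusion isotonicity transfers the pointwise value $N(z)$ into the interval expression defining $K_{G,x,Y}(I)$. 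Everything else is a routine application of Brouwer.
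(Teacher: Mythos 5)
The paper does not prove this theorem at all: it is quoted from the literature (Rump's theorem, cited as \cite{Rump}, with the complex-interval refinement in \cite{BRT}), so your argument can only be measured against the standard proof in those sources. Your proof \emph{is} that standard proof: write $G(z)$ in mean-value (integral) form around $x$, use inclusion isotonicity to conclude that the Newton-type map $N(z) = z - YG(z)$ satisfies $N(I)\subseteq K_{G,x,Y}(I)\subseteq I$, and then apply Brouwer's fixed point theorem on the compact convex box $I\subseteq\mathbb{R}^{2m}$, using invertibility of $Y$ to turn the fixed point into a zero of $G$. Each of these steps is executed correctly, including the two points people usually fumble: the integral form of the mean value theorem (the pointwise form is indeed false for vector-valued maps) and the fact that the averaged Jacobian lies in the convex interval matrix $DG(I)$.

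One caveat deserves emphasis. Your argument requires $x\in I$ --- you arrange this by ``taking $x$ to be the midpoint of $I$'' --- because the segment from $x$ to $z$ must stay inside $I$ for each $DG(x+t(z-x))$ to lie in $DG(I)$. But in the statement as printed, $x\in\mathbb{C}^m$ is arbitrary given data, not something you may choose; and without the hypothesis $x\in I$ the statement is actually false. A one-variable counterexample (real intervals, viewed as complex intervals with degenerate imaginary part): take
\begin{align*}
G(z) = z - 100 + \tfrac{z^6}{64}, \qquad I = [-1,1], \qquad Y = 1, \qquad x = 2\cdot 100^{1/6}\approx 4.31,
\end{align*}
so that $G(x)=x$ and hence $x - YG(x) = 0$. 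The interval evaluation gives $\mathbf{1} - Y\,DG(I) = [-\tfrac{3}{32},\tfrac{3}{32}]$ and $I - x \approx [-5.31,-3.31]$, so $K_{G,x,Y}(I) \approx [-0.50, 0.50] \subseteq I$; yet $G \le 1 - 100 + \tfrac{1}{64} < 0$ on $I$, so $I$ contains no zero. The sources the paper cites do include $x\in I$ among the hypotheses, so the omission is a transcription slip in the paper rather than a flaw in your argument --- but your write-up should state $x\in I$ as an assumption (noting it is missing from the statement) rather than silently repositioning $x$ at the midpoint.
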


These theorems from interval arithmetic allow one to compute bounding sets on zeros of a system $G$. As we will soon see, in our setting isolating these zeros will be sufficient for proving they are smooth.

A benefit of using interval arithmetic over other methods of numerical certification is that the conditions can be verified with floating-point arithmetic through the use of proper rounding etiquitte. This computational ease drastically decreases the time required to compute and certify bounding sets for zeros of a system as described in \cite{BRT}. Numerical certification using inverval arithmetic has been implemented in the \texttt{julia} package \texttt{HomotopyContinuation.jl} \cite{HCJL}, which we make great use of. Given a system $G$, this software will both compute approximate zeros of $G$ and attempt to provide an interval satisfying the condition above for each approximate solution.

\section{Results}
\begin{theorem}
Those Fano problems at least alternating Galois group and degree less than 75,000 have fully symmetric Galois group. More precisely, those Fano problems not equal to $(1,3,(3))$ or $(r,2r+2,(2,2))$ for $r\ge 1$ and with degree less than 75,000 have Galois group equal to the symmetric group.
\end{theorem}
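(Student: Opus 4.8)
The plan is to leverage the dichotomy already in place: by the theorem of Hashimoto and Kadets, every Fano problem outside the excluded list $(1,3,(3))$ and $(r,2r+2,(2,2))$ has Galois group containing the alternating group $A_N$ on its $N=\deg(r,n,d_\bullet)$ roots. Since $A_N$ has index two in $S_N$, the only subgroups of $S_N$ containing $A_N$ are $A_N$ and $S_N$ itself, so it suffices to exhibit a single odd permutation in each such Galois group. Following Harris, the cleanest source of an odd permutation is a simple transposition arising as local monodromy around a branch point $F$ where the fiber $\mathcal{V}_r(F)$ degenerates into exactly one double point together with $N-2$ reduced points. I would therefore reduce the theorem to a finite per-problem task: for each relevant type, produce one rational witness system $F$ whose Fano scheme has precisely this structure.

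First I would enumerate the finitely many Fano problems in scope. Using the degree lower bound of the Proposition, only finitely many types $(r,n,d_\bullet)$ satisfy $\deg(r,n,d_\bullet)<75{,}000$; I would list them all, discard the two excluded families, and record $N$ for each from the Debarre--Manivel Chern-class formula. For each remaining type I would pass to the affine chart $U$ of the Grassmannian whose bottom block is the identity, so that $U\cap\mathcal{V}_r(F)$ is cut out by a square system $G$ of $(n-r)(r+1)$ polynomials in as many variables.

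The core is then a two-part certification for each witness $F$. For the double point, I would engineer $F$ with an evident candidate root $x$ together with a candidate kernel vector $v$, all with rational coordinates, and verify symbolically the Dedieu--Shub conditions $G(x)=0$, $\ker DG(x)=\langle v\rangle$, and $D^2G(x)(v,v)\notin\operatorname{Im}DG(x)$. By their result this certifies exactly that $x$ is an isolated zero of multiplicity two, so the local monodromy is a simple transposition. For the remaining roots I would run \texttt{HomotopyContinuation.jl} to compute approximate solutions and then, via Rump's theorem and the Krawczyk operator, produce $N-2$ pairwise-disjoint complex intervals, each provably containing a zero of $G$ and all disjoint from an interval isolating $x$. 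Since the double point already accounts for multiplicity two and the total multiplicity of the zero set is $N$, these $N-2$ further isolated zeros must each be simple, hence smooth; this degree count is precisely why mere isolation suffices. Together this confirms the fiber has the required shape and closes the argument for that type.

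The hard part will be the second certification step rather than the conceptual reduction. One must be sure the numerical solver has found all $N-2$ simple roots so that the count genuinely closes at $N$ and none is missed, and that every Krawczyk enclosure both verifies and is separated from the others and from the double point; for types near the degree bound this is a large computation whose interval certificates must succeed uniformly. A secondary subtlety is constructing, for each type, a rational witness $F$ whose Fano scheme has a single double point inside the chosen chart with no root escaping to the complement of $U$; this is where the explicit structure of the witness systems, rather than a generic construction, carries the load.
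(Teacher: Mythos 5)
Your proposal is correct and follows essentially the same route as the paper: reduce via Hashimoto--Kadets to exhibiting a simple transposition, construct a rational witness $F$ by prescribing a point $\ell$ and tangent vector $v$ (linear conditions on $F$), verify the Dedieu--Shub simple double zero conditions symbolically in the square local system $G$, and certify the remaining $\deg(r,n,d_\bullet)-2$ zeros with disjoint Krawczyk intervals, using the total multiplicity bound to conclude smoothness. Even the practical concerns you flag (finding all roots, keeping the Fano scheme inside the chart $U$) match the paper's treatment, which handles the latter by genericity of $F$.
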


\begin{proof}
Let $(r,n,d_\bullet)$ be a Fano problem not equal to $(1,3,(3))$ or $(r,2r+2,(2,2))$ for $r\ge 1$ (and having degree less than 75,000). As a means of finding $F\in\mathbb{C}^{(r,n,d_\bullet)}$ whose Fano scheme has a double point and $\deg(r,n,d_\bullet)-2$ smooth points, prescribe a subscheme of $\mathcal{V}_r(F)$. Choose $\ell\in\mathbb{G}(r,\mathbb{P}^n)$ and a tangent vector $v\in T_\ell\mathbb{G}(r,\mathbb{P}^n)$ to lie in the Fano scheme $\mathcal{V}_r(F)$. Both of these conditions are linear constraints in $\mathbb{C}^{(r,n,d_\bullet)}$, so $F$ may be chosen to have complex rational coefficients if $\ell$ and $v$ are chosen to have complex rational local coefficients. We choose $F$ to be general satisfying these conditions while also having complex rational coefficients. We wish to show this $F$ has a Fano scheme consisting of a double point and $\deg(r,n,d_\bullet)-2$ smooth points.

Write the square polynomial system describing the Fano scheme $\mathcal{V}_r(F)$ in local coordinates by $G$. As the coefficients of $G$ depend linearly on the coefficients of $F$, the coefficients of $G$ are complex rational as well and we may apply the techniques described above to verify whether $F$ satisfies the desired properties.

The $r$--plane $\ell\in\mathcal{V}_r(F)$ in local coordinates, written $x_\ell$, is a zero of the system $G$. As the point $\ell$ has some tangency in the Fano scheme $\mathcal{V}_r(F)$, it is our candidate to be the double point of $\mathcal{V}_r(F)$. As $G$, $\ell$, and $v$ have all been chosen to have complex rational coefficients and coordinates, it is symbolically checked that $x_\ell$ is in fact a simple double zero of $G$ and hence, $\ell$ is a double point of $\mathcal{V}_r(F)$.

Approximate solutions for the remaining zeros to $G$ are readily computed by softwares such as \texttt{NAG4M2} \cite{NAG4M2}, \texttt{Bertini} \cite{Bertini}, and \texttt{HomotopyContinuation.jl} \cite{HCJL} and numerical certification techniques are used to isolate the remaining $\deg(r,n,d_\bullet)-2$ zeros of $G$ with disjoint sets, each of which does not contain $x_\ell$. As there are at most $\deg(r,n,d_\bullet)$ many isolated solutions to $G$ counting multiplicity, isolating these $\deg(r,n,d_\bullet)-2$ solutions (and $x_\ell$) also verifies that these isolated zeros are smooth. 

This shows $F\in\mathbb{C}^{(r,n,d_\bullet)}$ is such that the Fano scheme $\mathcal{V}_r(F)$ consists of a double point and $\deg(r,n,d_\bullet)-2$ smooth points, and the local monodromy around $F$ generates a simple transposition. As the Galois group $\gal_{(r,n,d_\bullet)}$ is either the alternating group or the symmetric group, it follows $\gal_{(r,n,d_\bullet)}$ is the symmetric group.
\end{proof}

This amounts to proving 12 at least alternating Galois groups of Fano problems are in fact fully symmetric, which were previously unknwon. Table \ref{Big Fano} lists the 12 problems whose Galois group was shown to be the symmetric group as well as timings indicating the time (in seconds (s)) needed to perform the numerical certification using the software \texttt{HomotopyContinuation.jl}. For each of these problems, the computed data of $F\in\mathbb{C}^{(r,n,d_\bullet)}$, the double point $\ell\in\mathbb{G}(r,\mathbb{P}^n)$, the tangent vector $v\in T_\ell\mathcal{V}_r(F)$, and sets isolating for each of the smooth points is available at \cite{GithubRepo}. This further includes code to verify the data satisfies the desired properties and code to generate data for larger Fano problems as well. This result suggests that all at least alternating Galois groups of Fano problems are fully symmetric, though this problem is still open.

\begin{table}[htb]
  \caption{Large Fano Problems}
  \label{Big Fano}
  \def\arraystretch{1.2}
  \begin{tabular}{||c|c|c|c|c|c||}
    \hline
    $~r~$ & $~n~$ & $~d_\bullet~$ & $~\deg(r,n,d_\bullet)~$ & $~\texttt{HomotopyCon}$ (s)~\\
    \hline\hline
    1 & 7 & $(2,2,2,2)$ & 512 & .61\\
    \hline
    1 & 6 & $(2,2,3)$ & 720  & .87\\
    \hline
    2 & 8 & $(2,2,2)$ & 1024 & 1.57\\
    \hline
    1 & 5 & (3,3) & 1053 & .32\\
    \hline
    1 & 5 & (2,4) & 1280 & .73\\
    \hline
    1 & 10 & (2,2,2,2,2,2) & 20480 & 15.44\\
    \hline
    1 & 9 & (2,2,2,2,3) & 27648 & 25.97\\
    \hline
    2 & 10 & (2,2,2,2) & 32768 & 36.67\\
    \hline
    1 & 8 & (2,2,3,3) & 37584 & 38.23\\
    \hline
    1 & 8 & (2,2,2,4) & 47104 & 111.88\\
    \hline
    1 & 7 & (3,3,3) & 51759 & 42.86\\
    \hline
    1 & 7 & (2,3,4) & 64512 & 125.63 \\
    \hline
  \end{tabular}
\end{table}

\bibliographystyle{abbrv}
\bibliography{CGGFFP}

\end{document}